\newtheorem{theorem}{Theorem}[section]
\newtheorem{lemma}[theorem]{Lemma}
\numberwithin{subcase}{case}
\numberwithin{claim}{theorem}
\numberwithin{conjecture}{section}
\newenvironment{definition}[1][Definition]{\begin{trivlist}
\item[\hskip \labelsep {\bfseries #1}]}{\end{trivlist}}
\newcommand{\ds}{\displaystyle}
\newcommand{\DQ}{\operatorname{DQ}}
\title{Luzin-type properties and the difference quotient of a real function.}
\author{Yury Andreev, Trevor J. Richards}
\begin{document}

\maketitle

\begin{abstract}
We introduce the notion of the difference quotient set of a real valued function $f$ on a set $E\subset[0,1]$, and compare this set to the range of $f$ on $E$.  We discuss the measure theoretic properties of both the range and the difference quotient set of $f$ over $E$ under different assumptions on $f$ and $E$.
\end{abstract}

\section{Introduction}

In the present paper we consider the regularity of the image of measurable sets under a continuous function.  An arbitrary function $f:[0,1]\to\mathbb{R}$ is said to possess the Luzin N-property if the image of any null set is a null set.  This property has been shown to have a number of interesting equivalent and related formulations (see~\cite{E} and citations there).  In particular, $f$ possesses the Luzin N-property if and only if $f(E)$ is measurable for each measurable subset $E\subset[0,1]$.  Closely related, and somewhat stronger, is the Banach S-property, which states that for any $\epsilon>0$, there is a $\delta>0$ such that if $E\subset[0,1]$ satisfies $\lambda(E)<\delta$, then $\lambda(f(E))<\epsilon$.  Both the Luzin N-property and the Banach S-property are analyzed in depth in~\cite{S}.

We are interested in what might be thought of as an inverse property to the Luzin N-property.  That is, we will find conditions on $f$ to ensure that the image of any positive mass set $E\subset[0,1]$ in turn has positive mass.  One could ask further whether there are reasonable conditions on $f$ which would ensure that the image of each positive mass set contains an interval.  The answer is negative. However if we consider instead the difference quotient of $f$ , which in effect introduces some interdependence to the function in question, then natural conditions on $f$ do exist which guarantee that the difference quotient set of $f$ over an arbitrary positive mass set will contain an interval.  To discuss this further we make the following definitions.

\vbox{
\begin{definition}  Let $g:[0,1]\to\mathbb{R}$ be a function, and let $F\subset[0,1]$ be a set.  Define $D=\{(x,x):0\leq x\leq1\}$.

\begin{itemize}
\item
For any $(x_1,x_2)\in([0,1]\times[0,1])\setminus D$, we define $\DQ_g(x_1,x_2)=\dfrac{g(x_2)-g(x_1)}{x_2-x_1}$.

\item
We define $$DQ_g(F)=\left\{DQ_g(x_1,x_2):(x_1,x_2)\in(F\times F)\setminus D\right\}.$$
\end{itemize}

\end{definition}
}

Throughout the remainder of the paper, we let a measurable function $f:[0,1]\to\mathbb{R}$ and a measurable set $E\subset[0,1]$ be fixed.  For $k\in\mathbb{R}$, we use the notation $\{f'=k\}$ to denote the set $\{x\in[0,1]:f'(x)=k\}$.  It is well known that absolutely continuous functions satisfy the Luzin N-property.  If $f$ is sufficiently smooth, then $\DQ_f$ will have the Luzin N-property as well.  If for example, $f\in C^1[0,1]$, then $\DQ_f$ will have continuous partial derivatives on $\left([0,1]\times[0,1]\right)\setminus D$, and will thus be locally Lipschitz on $\left([0,1]\times[0,1]\right)\setminus D$.  Locally Lipschitz functions of several variables have the Luzin N-property.  (The above facts are straight-forward exercises, the latter using the Cauchy--Schwartz inequality.)  Using the fact that if $E$ is null in $\mathbb{R}$, then $E\times E$ is null in $\mathbb{R}^2$, and thus $(E\times E)\setminus D$ is null as well, we obtain the following.

%Follows from what I have written in question:  http://math.stackexchange.com/questions/1404530/lusin-property-n-for-functions-of-several-variables

\begin{theorem}Assume that $\lambda(E)=0$.
\begin{itemize}
\item If $f$ is absolutely continuous then $\lambda\left(f(E)\right)=0$.

\item If $f\in C^1[0,1]$ then $\lambda\left(\DQ_f(E)\right)=0$.
\end{itemize}
\end{theorem}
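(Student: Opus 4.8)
The plan is to view both bullets as Luzin N-type statements and to reduce the second one to the multivariable Luzin N-property for locally Lipschitz maps. The first bullet needs almost nothing: since $f$ is absolutely continuous it has the Luzin N-property recalled above, so $\lambda(E)=0$ gives $\lambda(f(E))=0$ directly. All of the work is in the second bullet.

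For the second bullet I would argue in three steps. First I would check that $(E\times E)\setminus D$ is Lebesgue null in $\mathbb{R}^2$: since $E\times E\subseteq E\times[0,1]$, which has planar measure $\lambda(E)\cdot 1=0$, the product $E\times E$ is planar null, and deleting $D$ can only make it smaller. Second, I would confirm that $f\in C^1[0,1]$ makes $\DQ_f$ locally Lipschitz on $([0,1]\times[0,1])\setminus D$; this is the fact recorded above, the point being that on any compact subset of $([0,1]\times[0,1])\setminus D$ the denominator $x_2-x_1$ is bounded away from $0$, so $\DQ_f$ is continuously differentiable there and hence locally Lipschitz. Third, I would attempt to push the null set $(E\times E)\setminus D$ through $\DQ_f$, invoking the Luzin N-property for locally Lipschitz functions of several variables to conclude $\lambda(\DQ_f(E))=0$.

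The hard part will be this third step, and I expect it to carry all of the genuine content. The difficulty is that $\DQ_f$ sends a planar domain into the line, so one is asking a Lipschitz map $\mathbb{R}^2\to\mathbb{R}$ to carry a two-dimensionally null set to a one-dimensionally null set, and this does not follow from the one-variable theory. In the dimension-reducing regime a Lipschitz map can smear a planar null set across an entire interval; the model to keep in mind is $f(x)=x^2$, where $\DQ_f(x_1,x_2)=x_1+x_2$ is the (Lipschitz) addition map and $\DQ_f(E)$ is, up to a null set, the sumset $E+E$. One therefore has to be precise about which notion of ``small set'' the multivariable Luzin N-property actually preserves: the natural route is to cover $(E\times E)\setminus D$ efficiently and track the local Lipschitz constants, i.e.\ to run the estimate against the appropriate one-dimensional Hausdorff measure rather than against planar Lebesgue measure, and then to verify that $(E\times E)\setminus D$ is negligible in that sharper sense. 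I would treat reconciling this gap---showing that planar Lebesgue nullity of $(E\times E)\setminus D$ really does force linear nullity of its $\DQ_f$-image---as the crux of the proof.
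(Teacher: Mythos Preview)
Your plan coincides with the paper's own argument almost word for word: the paper justifies this theorem in the paragraph immediately preceding it by invoking exactly the ingredients you list (absolute continuity $\Rightarrow$ Luzin N for the first bullet; $f\in C^1\Rightarrow\DQ_f$ locally Lipschitz off $D$; ``locally Lipschitz functions of several variables have the Luzin N-property''; and $E$ null in $\mathbb{R}\Rightarrow (E\times E)\setminus D$ null in $\mathbb{R}^2$). So at the level of strategy there is nothing to compare.

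The real content is in your third paragraph, and there you are right to worry---but you should go further. The gap you flag cannot be reconciled, because the second bullet is \emph{false} as stated, and your own model $f(x)=x^2$ already supplies the counterexample. With $\DQ_f(x_1,x_2)=x_1+x_2$, take $E=\tfrac12 C\subset[0,\tfrac12]$, where $C$ is the standard ternary Cantor set. Then $\lambda(E)=0$, while the classical identity $C+C=[0,2]$ gives $E+E=[0,1]$; since $(E+E)\setminus\DQ_f(E)\subseteq 2E$ is null, we obtain $\lambda(\DQ_f(E))=1>0$. The assertion that locally Lipschitz functions of several variables have the Luzin N-property is correct for maps $\mathbb{R}^n\to\mathbb{R}^n$ but says nothing useful for a map $\mathbb{R}^2\to\mathbb{R}$: the coordinate projection $(x,y)\mapsto x$ already sends the planar-null line $\{y=0\}$ onto all of $\mathbb{R}$. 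Thus the paper's step---your third step---simply fails in this dimension-reducing setting, and no Hausdorff-measure refinement will rescue it; you should record the Cantor-set example as a counterexample rather than keep looking for a proof.
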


As noted above, our main goal in this paper is to explore results which lie in the opposite direction.  In order to achieve these results we will need at least one of the following conditions to hold for $f$.

\begin{itemize}
\item $\mathscr{A}$: $\lambda\left(\left\{f'=k\right\}\right)=0$ for every $k\in\mathbb{R}$. 
\item $\mathscr{B}$: $\lambda\left(\left\{f'=0\right\}\right)=0$.
\item $\mathscr{C}$: $f$ is not linear on any positive mass subset of $[0,1]$.
\item $\mathscr{D}$: $f$ is not constant on any positive mass subset of $[0,1]$.

\end{itemize}

The dependence relations between the above concepts are as follows.

\begin{center}
\begin{tikzcd}[arrows=Rightarrow]
& &\mathscr{B} \arrow[drr] \\
\mathscr{A} \arrow[urr] \arrow[drr]& & & & \mathscr{D} \\
& &\mathscr{C} \arrow[urr]
\end{tikzcd}
\end{center}

%Source of arrow diagram, with nice other options
%\begin{tikzcd}[arrows=Rightarrow]
%& \text{Symm} \arrow[r] \arrow[ddr] & \text{S-Comm} \\
%\text{Comm} \arrow[r] & \text{F-Symm} \arrow[u] \arrow[r, crossing over] &
%  \text{Rev} \arrow[r] \arrow[d] \arrow[u] &
%  \text{Per} \arrow[r,Rightarrow,"\;\;*" very near end] & \text{M} %\\
%& \text{Red} \arrow[u] & \text{Ref}
%\end{tikzcd}

An interesting consideration in the remainder of the paper is which of the assumptions above are necessary to achieve the desired conclusions.  In Section~\ref{sect: Proof of the main theorem.}, we will prove the following theorem.

\begin{theorem}\label{thm: Main.}
Assume that $\lambda(E)>0$ and that $f\in C^1[0,1]$.  Then the following hold.
\begin{enumerate}
\item If $f$ satisfies $\mathscr{B}$, then $\lambda\left(f(E)\right)>0$.
\item If $f$ satisfies $\mathscr{C}$, then $\lambda(\DQ_f(E))>0$.  In fact, for almost every $(x_1,x_2)\in (E\times E)\setminus D$, $\DQ_f(x_1,x_2)$ lies in the interior of $\DQ_f(E)$.
\end{enumerate}
\end{theorem}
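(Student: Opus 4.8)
The plan is to handle the two parts by the natural one- and two-dimensional change-of-variables formulas, isolating the genuinely delicate point in the ``interior'' claim of part~(2). For part~(1) I would invoke the area formula (Banach indicatrix) for the $C^1$ function $f$: for every measurable $A\subseteq[0,1]$,
\[
\int_A |f'|\,d\lambda=\int_{\mathbb{R}}\#\bigl(A\cap f^{-1}(y)\bigr)\,dy .
\]
Applying this with $A=E$ and using $\mathscr{B}$ (so $\{f'=0\}$ is null), we get $\int_E|f'|\,d\lambda=\int_{E\setminus\{f'=0\}}|f'|\,d\lambda>0$, the integrand being strictly positive on a set of positive measure. The right-hand integrand vanishes off $f(E)$, so if $\lambda(f(E))=0$ the right side would be an integral over a null set, hence $0$; this contradiction gives $\lambda(f(E))>0$.

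For part~(2) I first record the derivative computation. A direct calculation gives that $\partial_{x_1}\DQ_f(x_1,x_2)=0$ exactly when $f(x_2)=f(x_1)+f'(x_1)(x_2-x_1)$, i.e. when $x_2$ lies on the tangent line to $f$ at $x_1$, and symmetrically for $\partial_{x_2}$. The key structural step is that under $\mathscr{C}$ \emph{both} partials are nonzero almost everywhere on $A\colonequals(E\times E)\setminus D$. Indeed, fix $x_1$ and consider the $x_1$-section of the closed set $Z_1\colonequals\{\partial_{x_1}\DQ_f=0\}$; it equals $\{x_2:f(x_2)=\ell_{x_1}(x_2)\}$, where $\ell_{x_1}$ is the affine tangent line at $x_1$. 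If this section had positive measure, then $f$ would agree with a single affine function on a set of positive measure, contradicting $\mathscr{C}$. Hence every section of $Z_1$ is null, so $\lambda^2(Z_1)=0$ by Fubini, and symmetrically $\lambda^2(Z_2)=0$.

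With both partials nonvanishing a.e., the coarea formula for $\DQ_f$ (applied on each region $\{|x_1-x_2|\ge\varepsilon\}$, where $\DQ_f$ is $C^1$, then letting $\varepsilon\to0$) gives $\int_A|\nabla\DQ_f|\,d\lambda^2=\int_{\mathbb{R}}\mathcal{H}^1\bigl(A\cap\DQ_f^{-1}(y)\bigr)\,dy$; the left side is positive and the right-hand integrand is supported on $\DQ_f(E)$, so $\lambda(\DQ_f(E))>0$. For the sharper statement I would show that almost every $p=(a,b)\in A$ has $a,b$ density points of $E$ and both partials of $\DQ_f$ nonzero at $p$, and then prove $\DQ_f(p)\in\operatorname{int}\DQ_f(E)$ for such $p$. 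On a small box $Q=I_1\times I_2$ about $p$ both partials keep a constant sign and stay bounded away from $0$ and $\infty$, so each level set meets $Q$ in a graph $x_2=\psi_y(x_1)$ with $|\psi_y'|$ confined to a fixed interval $[\kappa_1,\kappa_2]$, $\kappa_1>0$, uniformly in $y$. For $y$ in a neighborhood of $y_0=\DQ_f(p)$, density of $E$ at $b$ forces $\psi_y^{-1}(E)$ to fill more than half of $I_1$, while density at $a$ forces $E\cap I_1$ to fill more than half of $I_1$; two such subsets of $I_1$ must intersect, producing $(x_1,x_2)\in E\times E$ with $\DQ_f(x_1,x_2)=y$. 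Hence a whole neighborhood of $y_0$ lies in $\DQ_f(E)$.

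The main obstacle is exactly this last step. The naive hope that $\DQ_f$ maps density points of $E\times E$ into the interior merely because it is a submersion there is false: a density-one point can project to a boundary point (deleting a sequence of level curves accumulating at $y_0$ preserves density $1$ while destroying interiority), so positivity or full density of the image alone never yields an interval. What rescues the argument is that $\mathscr{C}$ forces \emph{both} partials to be nonzero, not merely $\nabla\DQ_f\neq0$: this is precisely what guarantees $\kappa_1>0$, i.e. that the transverse level curves spread the positive density of $E$ in the $x_2$-direction across a full subinterval of $I_1$, so that the Steinhaus-type ``two large sets intersect'' step applies. Making the density estimates and the uniformity in $y$ quantitative, so that a single box serves an entire interval of values $y$, is the part requiring genuine care.
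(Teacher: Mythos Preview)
Your approach is correct and shares the decisive structural lemma with the paper, but carries out the ``interior'' step by a genuinely different mechanism. Both arguments recognize that $\partial_{x_i}\DQ_f(x_1,x_2)=0$ is equivalent to $f'(x_i)=\DQ_f(x_1,x_2)$, and use $\mathscr{C}$ together with a Fubini/section argument to show this fails for almost every pair; this is exactly the paper's ``Porcupine lemma''. For the interior claim, however, the paper first normalizes by affine maps so that the chosen pair becomes $(-1,1)$ with $g(\pm1)=0$ and $g'(\pm1)\neq0$, proves $0\in\DQ_g(F)$ by a one-dimensional bi-Lipschitz density estimate (the images $g(F^-)$ and $g(F^+)$ each occupy more than $3/4$ of a common interval, hence meet), and then---instead of varying the target value $y$---\emph{rotates the graph of $g$} through a small angle $\theta$ and reruns the identical argument for the rotated function $g_\theta$, yielding $\tan(-\theta)\in\DQ_g(F)$ for all small $\theta$. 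Your route stays upstairs in two variables, writing the level sets $\{\DQ_f=y\}$ as graphs $x_2=\psi_y(x_1)$ with slope confined to $[\kappa_1,\kappa_2]$ and pushing the density of $E$ through $\psi_y$. This is more direct and avoids the affine reduction, at the cost of having to make the density threshold absorb the distortion ratio $\kappa_2/\kappa_1$ and of tracking uniformity in $y$ (which you rightly flag as the delicate point). The paper's rotation trick buys exactly that: once the $y_0$ case is settled, the perturbation is a rigid motion, so no new distortion constants enter and the same five numbered estimates persist verbatim. For part~(1) the paper gives the elementary argument---pick a density point of $E$ with $f'\neq0$, restrict to an interval where $|f'|\ge m/2$, and change variables---rather than invoking the Banach indicatrix, but your version is of course equally valid.
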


The symmetry in the properties $\mathscr{A}-\mathscr{D}$ and in the two items of Theorem~\ref{thm: Main.} may lead one to suspect that the assumption $\mathscr{B}$ in Item~1 might be replaced by the weaker assumption $\mathscr{D}$.  However this is not the case.  In Section~\ref{sect: Limiting counter-example.} we will describe the construction of a function $g\in C^1[0,1]$ which satisfies $\mathscr{D}$, and find a positive mass subset of $[0,1]$ whose image under $g$ is null.

Regarding the second item of Theorem~\ref{thm: Main.} we have several remarks.  There is a well known measure theoretic fact that, for any positive mass set $G\subset\mathbb{R}$, the set $G-G=\{x-y:x,y\in G\}$ contains a neighborhood of the origin.  If $\lambda(E)>0$ and $f$ has a continuous derivative on $[0,1]$ and satisfies $\mathscr{B}$, then the first item of Theorem~\ref{thm: Main.} gives us that the set $f(E)-f(E)$ contains an interval centered at the origin.  While the set $f(E)-f(E)$ is certainly related to $\DQ_f(E)$, we have not been able to use this fact directly to prove that $\DQ_f(E)$ must contain an interval.  By instead making the assumption that $f$ satisfies $\mathscr{C}$, we establish in the second item of Theorem~\ref{thm: Main.} that the difference quotient set for $f$ on $E$ must contain an interval.

It is reasonable to ask if there is any version of the second item of Theorem~\ref{thm: Main.} which holds for the range of $f$ rather than for the difference quotient set.  However for any $n>0$ and any measurable function $g:[0,1]^n\to\mathbb{R}$, there is a set $F\subset[0,1]^n$ such that $\lambda(F)=1$ and $g(F)$ contains no interval.  We will include this construction in Section~\ref{sect: Limiting counter-example.} as well.

Finally, recall that $\DQ_f(E)$ is the set of values one obtains by taking difference quotients over all pairs in $(E\times E)\setminus D$, where $D$ is the diagonal, $D=\{(x,x):x\in[0,1]\}$.  It is worth noting that if one wished instead to consider the difference quotient of $f$ over an arbitrary positive mass set (even full mass set) $G\subset([0,1]\times[0,1])\setminus D$, the conclusion referred to in the preceding paragraph shows that the set $\{\DQ_f(x_1,x_2):(x_1,x_2)\in G\}$ would not necessarily contain an interval, regardless of what regularity conditions were imposed on $f$.

\section{Proof of Theorem~\ref{thm: Main.}.}\label{sect: Proof of the main theorem.}

Assume throughout this section that $\lambda(E)>0$ and that $f\in C^1[0,1]$.  To establish the first item of Theorem~\ref{thm: Main.}, assume further that $f$ satisfies $\mathscr{B}$, namely that $\lambda\left(\{f'=0\}\right)=0$.  We will begin with a definition.

\begin{definition}
Let $F,G\subset\mathbb{R}$ be measurable with $\lambda(G)>0$, and let $x\in\mathbb{R}$ be given.

\begin{itemize}
\item
We define the density of $F$ in $G$ to be $$\Delta(F,G)=\dfrac{\lambda(F\cap G)}{\lambda(G)}.$$

\item
We say that the density of $F$ at $x$ is $\Delta(F,x)=\ds\lim_{s\to0^+}\Delta(F,(x-s,x+s))$, if the limit exists.  If $\Delta(F,x)=1$, we say that $F$ has full density at $x$, and that $x$ is a density point of $F$.
\end{itemize}
\end{definition}

By the Lebesgue density theorem and the assumption $\mathscr{B}$, we can find some point $x_0\in E$ which is a density point of $E$, and at which $f'$ takes a non-zero value.

Define $m=f'(x_0)$, and assume that $m>0$, otherwise making the appropriate minor changes.  Since $f'$ is continuous, we can find some open interval $I\subset[0,1]$ with $x_0\in I$ and such that $f'(x)\geq m/2$ for all $x\in I$.  Since $x_0$ is a density point of $E$, we have $\lambda\left(E\cap I\right)>0$.  We will now show that $\lambda\left(f(E\cap I\right))>0$.  We will use that

$$\lambda\left(f(E\cap I)\right)=\ds\int_{f(E\cap I)}1dy.$$

Since $f'>0$ on $I$, $f$ is strictly increasing on $I$, so we may define $f^{-1}$ to be the branch of the inverse of $f$ which maps $f(I)$ back to $I$, and thus $f^{-1}(f(E\cap I))=E\cap I$.  In the integral above, we will use the substitution $x=f^{-1}(y)$, and thus we have $dy=f'(x)dx$.  We obtain

$$\lambda\left(f\left(E\cap I\right)\right)=\ds\int_{f(E\cap I)}1dy=\int_{E\cap I}f'(x)dx\geq\int_{E\cap I}\dfrac{m}{2}dx=\dfrac{m}{2}\lambda(E\cap I)>0,$$

which establishes the first item of Theorem~\ref{thm: Main.}.

To prove the second item of Theorem~\ref{thm: Main.}, we assume that $f$ satisfies $\mathscr{C}$, namely that $f$ is not linear on any positive mass subset of $[0,1]$.  We will begin with a definition and two lemmas.  The first lemma is the essential step, establishing the desired result in a single special case.  The second lemma then shows that in fact the setting of the first lemma is typical for the function $f$ which we are considering.

\begin{definition}
For a set $A\subset\mathbb{R}^2$ define the projections $$\Pi_x(A)=\{x:(x,y)\in A\text{ for some }y\in\mathbb{R}\},$$ and $$\Pi_y(A)=\{y:(x,y)\in A\text{ for some }x\in\mathbb{R}\}.$$
\end{definition}

\begin{lemma}\label{lem: Concrete context.}
Let $g$ be a function defined and continuously differentiable in a neighborhood of both $-1$ and $1$, and assume that $g(-1)=0=g(1)$, and that $g'(-1)\neq0\neq g'(1)$.  Let $F\subset\mathbb{R}$ denote some measurable set having full density at both $-1$ and $1$.  Then $\DQ_g(-1,1)=0$ is in the interior of $\DQ_g(F)$.
\end{lemma}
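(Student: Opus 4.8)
The plan is to reduce the two–dimensional problem to a one–dimensional intersection problem, where positivity of measure can be exploited. First I would record that $\DQ_g(-1,1)=\frac{g(1)-g(-1)}{1-(-1)}=0$, so the claim is that $(-\delta,\delta)\subset\DQ_g(F)$ for some $\delta>0$. Write $\Phi(x_1,x_2)=\DQ_g(x_1,x_2)$, which is $C^1$ on a neighborhood of $(-1,1)$ since $x_2-x_1$ stays near $2$ there. A direct computation gives
$$\partial_{x_1}\Phi(-1,1)=-\tfrac{g'(-1)}2,\qquad \partial_{x_2}\Phi(-1,1)=\tfrac{g'(1)}2,$$
both nonzero by hypothesis, so $\Phi$ is a submersion at $(-1,1)$. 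It is tempting to conclude directly from the fact that $F\times F$ has full density at $(-1,1)$ (which follows from the product structure and Fubini), but this is not enough: each level set $\{\Phi=c\}$ is a $C^1$ curve, hence a null subset of the plane, and a null curve can avoid a full–density set entirely. So the two–dimensional density is the wrong tool, and I must genuinely exhibit solutions of $\Phi(x_1,x_2)=c$ inside $F\times F$ for every small $c$.

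To set up the reduction, fix $c$ near $0$. Since $\partial_{x_2}\Phi(-1,1)\neq0$, the implicit function theorem presents $\{\Phi=c\}$ near $(-1,1)$ as a graph $x_2=\gamma_c(x_1)$, where $\gamma_c$ is a $C^1$ local diffeomorphism with $\gamma_0'(-1)=g'(-1)/g'(1)\neq0$ and $\gamma_0(-1)=1$. Solving $\Phi(x_1,x_2)=c$ with $(x_1,x_2)\in F\times F$ then amounts to finding $x_1\in F$ with $\gamma_c(x_1)\in F$, i.e. a point of $F\cap\gamma_c^{-1}(F)$ near $-1$. Now $F$ has full density at $-1$, and since $\gamma_c^{-1}$ is a $C^1$ diffeomorphism carrying the density point $1$ of $F$ to the point $a_c:=\gamma_c^{-1}(1)$, the set $\gamma_c^{-1}(F)$ has full density at $a_c$; moreover $a_c\to-1$ as $c\to0$. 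Thus I would have traded the original statement for the assertion that two sets, each of full density at a point, with those two points close together, must intersect near $-1$.

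The core one–dimensional fact I would isolate and prove is: if $P,Q\subset\mathbb{R}$ have density exceeding $\tfrac34$ in a common interval $I$, then $\lambda(P\cap I)+\lambda(Q\cap I)>\lambda(I)$, whence $P\cap Q\cap I\neq\emptyset$. Applying this with $P=F$ and $Q=\gamma_c^{-1}(F)$ on a small fixed interval $I$ around $-1$ yields the required intersection, hence a solution of $\Phi=c$ in $F\times F$, and therefore $c\in\DQ_g(F)$. Since the intersection point has $x_1$ near $-1$ and $\gamma_c(x_1)$ near $1$, the corresponding pair lies in $(F\times F)\setminus D$; letting $c$ range over $(-\delta,\delta)$ then shows $0$ is interior to $\DQ_g(F)$.

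The main obstacle is uniformity in $c$: the interval $I$ and the density threshold must be chosen once and for all, independently of $c$, so that both $F$ and $\gamma_c^{-1}(F)$ have density above $\tfrac34$ in $I$ for every small $c$. This requires controlling the density point $a_c$ (which varies continuously with $c$, so that $I$ can be centered to contain a neighborhood of both $-1$ and $a_c$) and, more delicately, controlling how $\gamma_c^{-1}$ distorts density. Since $g'$ is continuous and nonvanishing near $\pm1$, for $|c|$ small the derivatives $\gamma_c'$ are uniformly bounded above and below away from $0$, so $\gamma_c^{-1}$ is uniformly bi-Lipschitz, and the density of $\gamma_c^{-1}(F)$ at $a_c$ is forced above $\tfrac34$ on a fixed scale uniformly in $c$. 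Verifying this uniform bi-Lipschitz density estimate is the technical heart of the argument; the remainder is bookkeeping with the implicit function theorem.
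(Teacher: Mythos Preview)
Your argument is correct and shares with the paper the same one-dimensional core: two sets each of density exceeding $3/4$ in a common interval must intersect. Where you diverge is in how the family of target slopes is handled. The paper first treats only the case $c=0$, pushing $F$ forward under $g$ on each side and comparing the densities of $g(F^-)$ and $g(F^+)$ inside the common image interval $g(I^-)=g(I^+)$; it then reaches nonzero slopes by a geometric trick, rotating the graph of $g$ through a small angle $\theta$, observing that the rotated picture still satisfies all the hypotheses of the $c=0$ argument, and rotating the resulting horizontal secant back to produce a secant of slope $\tan(-\theta)$ between points of $F$. You instead parametrize the level curves $\{\Phi=c\}$ directly via the implicit function theorem and pull $F$ back along $\gamma_c$, treating all small $c$ at once through uniform bi-Lipschitz bounds on $\gamma_c'$. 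Your route is more analytic and makes the uniformity in $c$ explicit from the outset; the paper's rotation device is more pictorial and avoids the implicit function theorem, at the cost of having to check that the rotated graph is again a $C^1$ graph on which the density estimates survive. Both approaches ultimately rest on exactly the bi-Lipschitz density-distortion estimate you correctly identify as the technical heart.
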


\begin{proof}

We will assume that $g'(-1),g'(1)>0$, otherwise making the appropriate minor changes.  Define $m=\min(g'(-1)/2,g'(1)/2)$ and $M=\max(2g'(-1),2g'(1))$.  Our first goal is to show that $0\in\DQ_g(F)$.  We begin by restricting $g$ to the union of two non-trivial closed intervals $I^-$ and $I^+$ (to be determined momentarily) containing $-1$ and $+1$ respectively.  We define $F^-=F\cap I^-$ and $F^+=F\cap I^-$.  $I^-$ and $I^+$ are any fixed non-empty closed intervals chosen small enough and in such a way that the following hold.

\begin{enumerate}
\item\label{item: I^-,I^+ bounded away from origin.}
$I^-\subset(-3/2,-1/2)$ and $I^+\subset(1/2,3/2)$.

\item\label{item: g continuous on I^- and I^+.}
$g$ is defined and continuously differentiable on $I^-\cup I^+$.

\item\label{item: Bound of g' on I^- and I^+.}
$m<g'<M$ on $I^\pm$ (which may be done since $g'$ is continuous close to $\pm1$).

\item\label{item: Density of F on I^- and I^+.}
The density of $F$ on each of $I^-$ and on $I^+$ is greater than $1-m/4M$ (so that $\Delta(I^\pm\setminus F^\pm,I^\pm)<m/4M$).

\item\label{item: g(I^-)=g(I^+).}
$g(I^-)=g(I^+)$ (which may be done because $g$ is strictly monotonic near both $-1$ and $+1$).

\end{enumerate}

Since $m<g'<M$ on $I^\pm$ (and thus on $F^\pm$) by Item~\ref{item: Bound of g' on I^- and I^+.} above, the same reasoning found in the proof of the first item of Theorem~\ref{thm: Main.} gives us $\lambda(g(I^\pm\setminus F^\pm))\leq M\lambda(I^\pm\setminus F^\pm)$ and $\lambda(g(I^\pm))\geq m\lambda(I^\pm)$.  Since $I^\pm\setminus F^\pm\subset I^\pm$, $g(I^\pm\setminus F^\pm)\subset g(I^\pm)$, so that
\begin{equation}\label{eqn: Density inequality.}
\Delta(g(I^\pm\setminus F^\pm),g(I^\pm))=\dfrac{\lambda(g(I^\pm\setminus F^\pm))}{\lambda(g(I^\pm))}\leq
\dfrac{M\lambda(I^\pm\setminus F^\pm)}{m\lambda(I^\pm)}.
\end{equation}

Moreover $\lambda(I^\pm\setminus F^\pm)=\Delta(I^\pm\setminus F^\pm,I^\pm)\cdot\lambda(I^\pm)$, so that Inequality~\ref{eqn: Density inequality.} and Item~\ref{item: Density of F on I^- and I^+.} above gives us $$\Delta(g(I^\pm\setminus F^\pm),g(I^\pm))\leq
\dfrac{M\Delta(I^\pm\setminus F^\pm,I^\pm)\cdot\lambda(I^\pm)}{m\cdot\lambda(I^\pm)}<\dfrac{M(m/4M)}{m}=\dfrac{1}{4}.$$

Therefore $\Delta(g(F^\pm),g(I^\pm))>3/4$.  Since $g(I^+)=g(I^-)$ (by Item~\ref{item: g(I^-)=g(I^+).} above), we conclude that $g(F^-)\cap g(F^+)\neq\emptyset$.  Choose some $x^-\in F^-$ and $x^+\in F^+$ (which are distinct points by Item~\ref{item: I^-,I^+ bounded away from origin.} above) such that $g(x^-)=g(x^+)$.  Then $0=\DQ_g(x^-,x^+)\in\DQ_g(F)$.

We now wish to show that $0$ is in fact in the interior of $\DQ_g(F)$.  First we define $\Gamma^-$ and $\Gamma^+$ to be the portions of the graph of $g$ restricted to $I^-$ and $I^+$ respectively.  We set $\Gamma=\Gamma^-\cup\Gamma^+$.  For $\theta\in\mathbb{R}$, let $R_\theta:\mathbb{R}^2\to\mathbb{R}^2$ denote the operation of counter-clockwise rotation around the origin through $\theta$ radians.  Define $\Gamma^-_\theta=R_\theta(\Gamma^-)$ and $\Gamma^+_\theta=R_\theta(\Gamma^+)$, and define $\Gamma_\theta=\Gamma^-_\theta\cup\Gamma^+_\theta$.

Since $m<g'<M$ on $I^\pm$, $\DQ_g(I^\pm)\subset(m,M)$.  Therefore for sufficiently small values of $\theta$, $\Gamma_\theta$ passes the ``vertical line test'', and is thus the graph of a function, which we will call $g_\theta$.  Define $I^-_\theta=\Pi_x(R_\theta(\Gamma^-))$ and $I^+_\theta=\Pi_x(R_\theta(\Gamma^+))$.

Speaking colloquially and for easy reference later, let us color red the points in $\Gamma$ which lie above or below $F^-\cup F^+$ (and think of these points as still colored red after rotation about the origin, that is after application of $R_\theta$).  Then we define $F^-_\theta\subset I^-_\theta$ to be the projection to the $x$-axis of the set of red points in $\Gamma^-_\theta$, and we define $F^+_\theta$ similarly.

If $\theta$ is sufficiently small, then the Items~\ref{item: I^-,I^+ bounded away from origin.}-\ref{item: g(I^-)=g(I^+).} above will still be true when $g$, $I^\pm$, and $F^\pm$ are replaced by $g_\theta$, $I^\pm_\theta$, and $F^\pm_\theta$.  Call this replacement the ``$\theta$-replacement''.  Note that after the $\theta$-replacement, the inequalities in Items~\ref{item: I^-,I^+ bounded away from origin.}-\ref{item: g(I^-)=g(I^+).} are only approximately true, in the sense that $m$ and $M$ may have to be replaced by $m+\epsilon$ and $M-\epsilon$ for some small $\epsilon>0$, but this $\epsilon$ may be chosen arbitrarily small by making $\theta$ sufficiently small.  Thus for sufficiently small $\theta$, the arguments above leading to the conclusion that $0\in\DQ_g(F)$ still work after the $\theta$-replacement.  In particular, the same steps of reasoning allow us to conclude that for some $x^-_\theta\in F^-_\theta$ and $x^+_\theta\in F^+_\theta$, $g_\theta(x^-_\theta)=g_\theta(x^+_\theta)$, and thus $0=\DQ_{g_\theta}(x^-_\theta,x^+_\theta)\in\DQ_{g_\theta}(F_\theta)$.

Let $L_\theta$ denote the horizontal line segment between the two red points $(x^-_\theta,g_\theta(x^-_\theta))$ and $(x^+_\theta,g_\theta(x^+_\theta))$, which lie in the graph of $g_\theta$.  Rotating $L_\theta$ back around the origin through $-\theta$ radians, we obtain a line segment between two red points in the graph of $g$ (that is, between two points in the graph of $g$ which lie above or below $F$).  The slope of this line segment is $\tan(-\theta)$, so we conclude that $\tan(-\theta)\in\DQ_g(F)$.  Since this is true for every sufficiently small $\theta$ (positive or negative), we conclude that $0$ is in the interior of $\DQ_g(F)$.

\end{proof}

\begin{lemma}\label{lem: Porcupine lemma.}
If $g\in C^1([0,1])$ satisfies $\mathscr{C}$, and $F\subset[0,1]$ has positive mass, then for almost every pair $(x_1,x_2)\in (F\times F)\setminus D$, $g'(x_1)\neq \DQ_g(x_1,x_2)\neq g'(x_2)$.
\end{lemma}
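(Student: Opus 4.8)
The plan is to show that the exceptional set of pairs has planar Lebesgue measure zero within $(F\times F)\setminus D$; write $\lambda_2$ for two-dimensional Lebesgue measure. By the symmetry of the two conditions under interchanging $x_1$ and $x_2$, it suffices to prove that
$$B=\left\{(x_1,x_2)\in(F\times F)\setminus D:\DQ_g(x_1,x_2)=g'(x_1)\right\}$$
is null, since the set on which $\DQ_g(x_1,x_2)=g'(x_2)$ is handled by the same argument after swapping coordinates, and the union of two null sets is null. First I would check measurability: since $g\in C^1([0,1])$, the map $(x_1,x_2)\mapsto\DQ_g(x_1,x_2)-g'(x_1)$ is continuous on $([0,1]\times[0,1])\setminus D$, so its zero set is relatively closed there, and $B$ is the intersection of this zero set with the measurable set $(F\times F)\setminus D$.

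The key step is to reinterpret the defining equation slice by slice. Fix $a\in[0,1]$ and let $\ell_a(x)=g(a)+g'(a)(x-a)$ be the tangent line to the graph of $g$ at $a$. For $x\neq a$, the equation $\DQ_g(a,x)=g'(a)$ is equivalent to $g(x)=\ell_a(x)$. Hence the vertical slice $B_a=\{x:(a,x)\in B\}$ is contained in the set $Z_a=\{x\in[0,1]:g(x)=\ell_a(x)\}$, on which $g$ agrees with the single affine function $\ell_a$. Since $g$ and $\ell_a$ are continuous, $Z_a$ is closed, hence measurable.

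Now I would invoke $\mathscr{C}$ to kill each slice. If $\lambda(Z_a)>0$ for some $a$, then $g$ would coincide with the affine function $\ell_a$ on the positive mass set $Z_a$, contradicting the assumption that $g$ is not linear on any positive mass subset of $[0,1]$. Thus $\lambda(Z_a)=0$, and therefore $\lambda(B_a)=0$, for every $a\in[0,1]$. Applying Tonelli's theorem to the characteristic function of the measurable set $B$ gives
$$\lambda_2(B)=\int_0^1\lambda(B_a)\,da=0,$$
and combining with the symmetric estimate completes the argument.

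The reasoning is short once this reformulation is in hand, so I do not anticipate a serious obstacle; the one point requiring care is the translation of the hypothesis. The content of $\mathscr{C}$ is exactly what forces \emph{every} slice $Z_a$ (not merely almost every slice) to be null, and it is this that makes the Tonelli step immediate and avoids any need for a quantitative estimate on how the zero set of $\DQ_g(\cdot,\cdot)-g'(\cdot)$ varies with $a$.
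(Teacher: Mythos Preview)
Your argument is correct and follows essentially the same route as the paper's: both fix the coordinate whose derivative appears in the equation, observe that the resulting one-dimensional slice is contained in the set where $g$ agrees with a single affine function, invoke $\mathscr{C}$ to make each slice null, and conclude via Fubini/Tonelli. The only cosmetic differences are that the paper argues by contradiction (positive mass bad set $\Rightarrow$ some positive mass slice) while you argue directly, and that you take a bit more care with measurability than the paper does.
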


\begin{proof}
Suppose by way of contradiction that there is some positive mass set $A\subset (F\times F)\setminus D$ such that for every pair $(x_1,x_2)\in A$, $\DQ_g(x_1,x_2)$ equals either $g'(x_1)$ or $g'(x_2)$.  By dropping to a subset, we may assume that for every $(x_1,x_2)\in A$, $\DQ_g(x_1,x_2)=g'(x_2)$ (or that for every $(x_1,x_2)\in A$, $\DQ_g(x_1,x_2)=g'(x_1)$, in which case we make the appropriate minor changes).  Choose now some fixed $y_0\in\Pi_y(A)$ such that the intersection of $A$ with the horizontal line at height $y_0$ has positive (one dimensional) measure.  Define $$X=\{x\in F:(x,y_0)\in A\}.$$  Then for each $x\in X$, $(x,y_0)\in A$ means that $g'(y_0)=\dfrac{g(y_0)-g(x)}{y_0-x}$, so that $g(x)=g'(y_0)x+(g(y_0)-y_0g'(y_0))$.  That is, $g$ is linear on $X$.  But by choice of $y_0$, $X$ has positive mass, which contradicts the assumption that $f$ satisfies $\mathscr{C}$.
\end{proof}

Proceeding now with the proof of Theorem~\ref{thm: Main.}, Lemma~\ref{lem: Porcupine lemma.} implies that $f'(x_1)\neq\DQ_f(x_1,x_2)\neq f'(x_2)$ for almost every $(x_1,x_2)\in (E\times E)\setminus D$.  Moreover by the Lebesgue density theorem, almost every point in $E$ is a density point of $E$.  Fix now any such point $(x_1,x_2)\in (E\times E)\setminus D$ such that $x_1$ and $x_2$ are density points of $E$, and $f'(x_1)\neq\DQ_f(x_1,x_2)\neq f'(x_2)$.  Let $T_1$ denote the linear map $T_1(x)=ax+b$ such that $T_1(-1)=x_1$ and $T_1(1)=x_2$.  Let $T_2$ denote the linear map $T_2(x)=cx+d$ such that $T_2(-1)=-f(x_1)$ and $T_2(1)=-f(x_2)$.  Define $g(x)=f(T_1(x))+T_2(x)$.  Define $F={T_1}^{-1}(E)$.  This $g$ and $F$ now satisfy the assumptions of Lemma~\ref{lem: Concrete context.}, and thus we conclude that $0=\DQ_g(-1,1)$ is in the interior of $\DQ_g(F)$.

However $\DQ_g(F)$ is obtained from $\DQ_f(E)$ by a dilation (multiplication by ${T_1}'$) followed by a translation (addition of ${T_2}'$).  We conclude that $\DQ_f(x_1,x_2)$ is in the interior of $\DQ_f(E)$.

\section{Limiting Examples to Strengthenings of Theorem~\ref{thm: Main.}.}\label{sect: Limiting counter-example.}

In this section we will display two constructions.  The first construction establishes a limit on the extent to which the assumption on $f$ in part~$1$ of Theorem~\ref{thm: Main.} may be weakened without losing the conclusion.  The second construction shows that no version of part~$2$ of Theorem~\ref{thm: Main.} may be attempted where the difference quotient set of $f$ is replaced by the range of $f$.

\subsection{$\mathscr{B}$ cannot be replaced by $\mathscr{D}$ in Theorem~\ref{thm: Main.} Part~1.}

Our goal is to construct a function $g:[0,1]\to\mathbb{R}$ which is continuously differentiable and satisfies $\mathscr{D}$, and to find a positive mass set contained in $[0,1]$ whose image under $g$ is null.  We will define our function $g$ by first describing a recursive construction of a sequence of nested sets $$[0,1]\times[0,1]\supset G_0\supset G_1\supset\cdots.$$  Having done this, $g$ will be the function whose graph is exactly $\displaystyle\bigcap_{i=0}^\infty G_i$.  We will give a more detailed description of this sequence below, but the reader should first consult Figures~\ref{fig: G_0.}~and~\ref{fig: G_1.} for a conceptual understanding of the recursive construction.  In Figures~\ref{fig: G_0.}~and~\ref{fig: G_1.}, we depict the sets $G_0$ and $G_1$ respectively.  In the recursive step, by which $G_{k+1}$ is produced from $G_k$ (for $k\geq 0$), each rectangle in $G_k$ is replaced by an affine copy of a set very similar to $G_0$ (though somewhat tamped down, as can be seen by the fact that the rectangles from $G_0$ in Figure~\ref{fig: G_0.} are not entirely filled in in Figure~\ref{fig: G_1.}, this ``tamping factor'' will be discussed further later).

\begin{figure}[!ht]
\begin{minipage}[b]{0.45\linewidth}
\centering
\includegraphics[width=\textwidth]{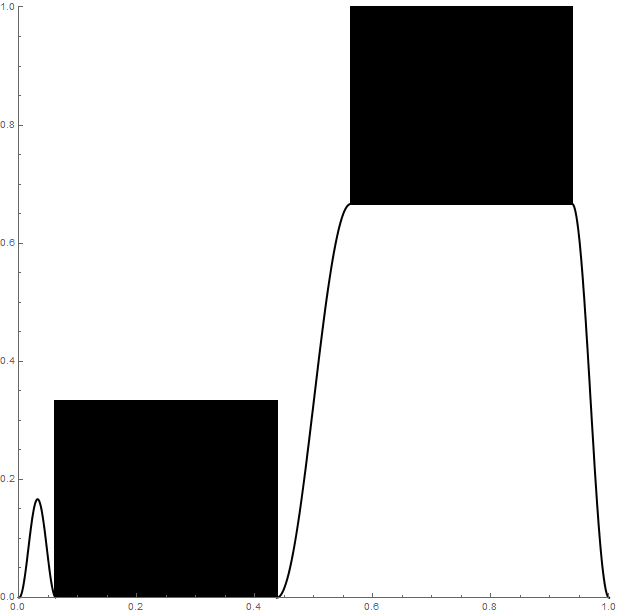}
\caption{The set $G_0$.}
\label{fig: G_0.}
\end{minipage}
\hspace{0.5cm}
\begin{minipage}[b]{0.45\linewidth}
\centering
\includegraphics[width=\textwidth]{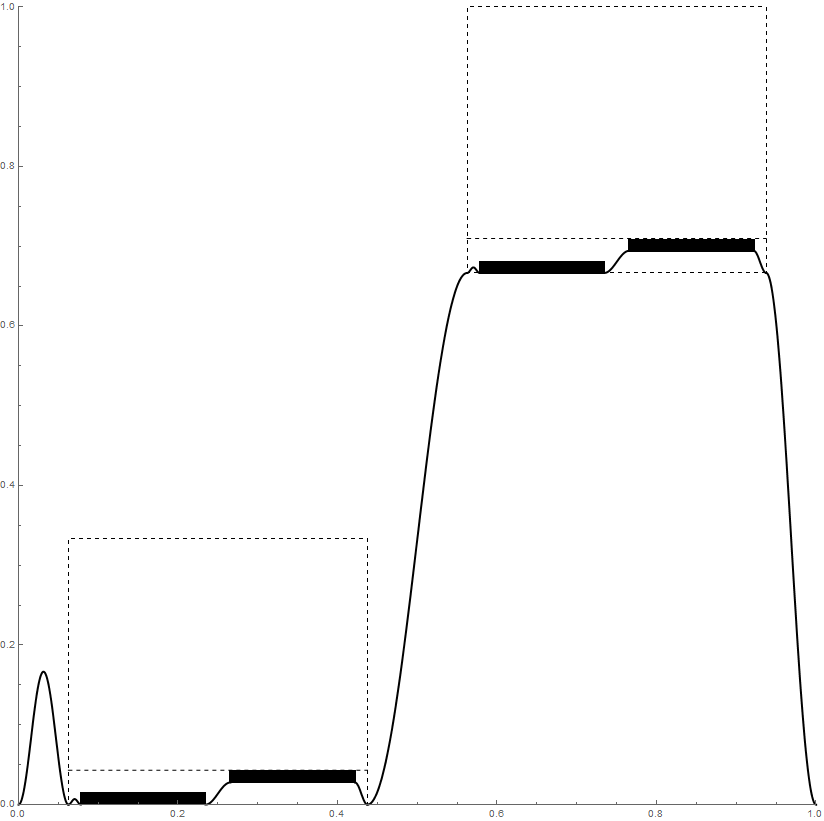}
\caption{The set $G_1$.}
\label{fig: G_1.}
\end{minipage}
\end{figure}

We will now give a more detailed construction.  We will begin with a sequence of auxiliary sets $\{F_k\}$, each contained in $[0,1]\times[0,1]$, and similar in spirit to $G_0$  (see Figure~\ref{fig: F_k.} for the typical $F_k$, these will be the sets which we use to replace the rectangles in $G_k$ in order to form $G_{k+1}$).  Each set in the sequence $\{F_k\}$ will depend on the corresponding element of a certain decreasing sequence of positive real numbers $\{v_k\}$.  The sequence $\{v_k\}$ must be chosen so that each $v_k\in(0,1/6)$, and $v_k\to0$.  Having selected such a sequence $\{v_k\}$, we define $F_k$ to be the union of the closed rectangle $$\left[\dfrac{v_k}{2},\dfrac{1}{2}-\dfrac{v_k}{2}\right]\times\left[0,\dfrac{1}{3}\right],$$ the closed rectangle $$\left[\dfrac{1}{2}+\dfrac{v_k}{2},1-\dfrac{v_k}{2}\right]\times\left[\dfrac{2}{3},1\right],$$ an affine copy of the graph $\{(x,\cos(x)):-\pi\leq x\leq\pi\}$ connecting the two points $(0,0)$ and $\left(\dfrac{v_k}{2},0\right)$, scaled to have total height $1/6$, an affine copy of the graph $\{(x,\sin(x)):-\pi/2\leq x\leq\pi/2\}$ connecting the two points $$\left(\dfrac{1}{2}-\dfrac{v_k}{2},0\right)\text{ and }\left(\dfrac{1}{2}+\dfrac{v_k}{2},\dfrac{2}{3}\right),$$ and an affine copy of the graph $\{(x,\cos(x)):0\leq x\leq\pi\}$ connecting the points $$\left(1-\dfrac{v_k}{2},\dfrac{2}{3}\right)\text{ and }\left(1,0\right).$$

\begin{figure}[!ht]

\centering
\includegraphics[width=.45\textwidth]{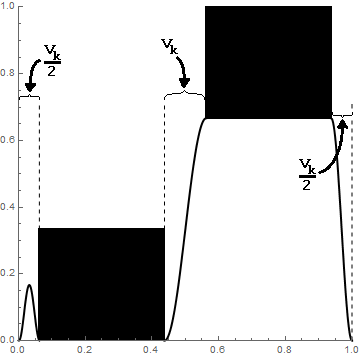}
\caption{The set $F_k$.}
\label{fig: F_k.}

\end{figure}

With the auxiliary sequence of sets $\{F_k\}$ constructed, we will now construct the sequence $\{G_k\}$.  Define $G_0=F_0$.  Choose now another decreasing sequence of positive real numbers $\{h_k\}\subset(0,1)$ which approaches zero (this will be the sequence of ``tamping factors'' mentioned above).  For $k\geq0$, in order to construct $G_{k+1}$ from $G_k$, each one of the closed rectangles in $G_k$ will be replaced with an affine copy of $F_k$.  Let $[a,b]\times[c,d]$ denote some such closed rectangle in $G_k$.  Let $T$ denote the affine map defined by $$T(x,y)=((b-a)x+a,h_k(d-c)y+c),$$ which is a bijection from $[0,1]\times[0,1]$ to $[a,b]\times[c,c+h_k(d-c)]$.  We replace $[a,b]\times[c,d]$ in $G_k$ by $T(F_k)$, and having done this for each closed rectangle in $G_k$, the resulting set we denote by $G_{k+1}$.  We now wish to show that the set $G_\infty=\ds\bigcap G_k$ is the graph of a function $g:[0,1]\to\mathbb{R}$ which has the desired properties.

Let $R_k$ denote the union of the closed rectangles used to form $G_k$ (for any $k\geq0$).  By inspecting the construction, it is easy to see that $R_k$ consists of $2^{k+1}$ identical rectangles, which do not overlap vertically or horizontally (that is, if $R_k$ is projected to the $x$-axis, the shadows of the $2^{k+1}$ rectangles do not overlap, and similarly for the $y$-axis).  Let $s_k$ and $t_k$ denote the width and height respectively of any one of the rectangles in $R_k$.  It follows immediately from the construction of the sequence $\{G_k\}$ that $t_k\to0$, and therefore that $G_\infty$ satisfies the ``vertical line test''.  Note also that elementary compactness considerations imply that for every $x\in[0,1]$, there is some $y\in[0,1]$ such that $(x,y)\in G_\infty$.  That is, $G_\infty$ is the graph of a well defined function on the entire interval $[0,1]$.

In order to show that this function $g$ has the desired properties, we begin by defining $X_k=\Pi_x(R_k)$ and $Y_k=\Pi_y(R_k)$, and $$X_\infty=\displaystyle\bigcap_{k=1}^\infty X_k\text{ and }Y_\infty=\bigcap_{k=1}^\infty Y_k.$$  Note that by the construction, $s_k-2s_{k+1}=2v_k$, so that $$\lambda(X_k\setminus X_{k+1})=2^{k+1}(s_k-2s_{k+1})=2^{k+1}v_k.$$  If we choose $v_k=\dfrac{1}{2^{2k+3}}$, then $\lambda(X_0)=\dfrac{3}{4}$, and we obtain $$\lambda\left(X_\infty\right)=\lambda(X_0)-\ds\sum_{k=0}^\infty\lambda(X_k\setminus X_{k+1})=\dfrac{3}{4}-\ds\sum_{k=0}^\infty 2^{k+1}\cdot\dfrac{1}{2^{2k+3}}=\dfrac{1}{4}>0.$$  On the other hand, each $h_k<1$, so that $t_{k+1}=\dfrac{1}{3}h_kt_k<\dfrac{t_k}{3}$.  It follows that $t_k<\dfrac{1}{3^k}\cdot s_0=\dfrac{1}{3^{k+1}}$, and thus that $\lambda(Y_k)\leq\dfrac{2^{k+1}}{3^{k+1}}$.  Therefore we have $$\lambda(Y_\infty)=\lambda\left(\ds\bigcap_{k=0}^\infty Y_k\right)\leq\lim_{k\to\infty}\lambda(Y_k)\leq\lim_{k\to\infty}\dfrac{2^{k+1}}{3^{k+1}}=0.$$

Therefore since $f(X_\infty)\subset Y_\infty$, we conclude that $\lambda(f(X_\infty))=0$.  This $X_\infty$ is the set which we we are looking for that contradicts the conclusion of second part of Theorem~\ref{thm: Main.}.

The final step in this section is to observe that with an appropriate choice of the tamping factors $\{h_k\}$, $g$ will be continuously differentiable, and to show that $g$ satisfies the property $\mathscr{D}$.

%See the end of the section of commented details showing that $g$ is continuously differentiable.

Note that the closure of ${X_k}^c$ is in fact contained in the interior of ${X_{k+1}}^c$.  And on the interior of any ${X_k}^c$, $g$ is continuously differentiable, so that $g$ is continuously differentiable on ${X_\infty}^c$.  On the other hand, by choosing the sequence $\{h_k\}$ to approach zero fast enough, we can ensure that $|g'(x)|<\dfrac{1}{k}$ for each $x\in X_k$.  This in turn implies that for each point $x_0\in X_\infty$, $$\ds\lim_{x\to x_0}g'(x)=0=g'(x_0),$$ so that $g$ is continuously differentiable on $X_\infty$.

Finally, observe that for any $t\in\mathbb{R}$, the horizontal line $y=t$ intersects any given $G_k$ in just one of the rectangles (which has horizontal width $s_k$), and in only finitely many of the smooth curves.  That is, the intersection has linear measure at most $s_k$.  Since the graph of $g$ is contained in any given $G_k$, $\lambda(g^{-1}(t))\leq\ds\lim_{k\to0}s_k=0$.  That is, $f$ satisfies the property $\mathscr{D}$.

In conclusion, we have a function $g\in C^1[0,1]$ which satisfies $\mathscr{D}$ and a positive mass set $X_\infty\subset[0,1]$ such that $g(X_\infty)$ is null.

\subsection{$\DQ_f(E)$ cannot be replaced by $f(E)$ in Theorem~\ref{thm: Main.} Part~2.}

Let $g:[0,1]^n\to\mathbb{R}$ be a measurable function.  We will now construct a set having full density in $[0,1]$, but whose image under $g$ does not contain any interval.  Define $$A:=\{y\in\mathbb{R}:\lambda\left(g^{-1}(y)\right)>0\}.$$  Basic principles of summability imply that $A$ is at most countable, so we may 
find a countable set $B\subset A^c$ which is dense in $\mathbb{R}$.  Therefore by countable additivity, $\lambda\left([0,1]^n\setminus g^{-1}(B)\right)=1$, but $g\left([0,1]^n\setminus 
g^{-1}(B)\right)$ does not intersect $B$, a dense set in $\mathbb{R}$, and thus $g\left([0,1]^n\setminus g^{-1}(B)\right)$ does not contain an interval.

\bibliographystyle{plain}
\bibliography{refs}

\end{document}